\newtheorem{theorem}{Theorem}[section]
\numberwithin{equation}{section} 
\theoremstyle{definition}
\newenvironment{example}[1][Example.]{\begin{trivlist}
\item[\hskip \labelsep {\bfseries #1}]}{\end{trivlist}}
\newenvironment{remark}[1][\it{Remark.}]{\begin{trivlist}
\item[\hskip \labelsep {\bfseries #1}]}{\end{trivlist}}
\newenvironment{acknowledgments}[1][Acknowledgments.]{\begin{trivlist}
\item[\hskip \labelsep {\bfseries #1}]}{\end{trivlist}}
\title{Kummer Congruences Arising from the Mirror Symmetry of an Elliptic Curve}
\date{\today}
\author{Adele Lopez}
\address{Department of Mathematics and Computer Science, Emory University, Atlanta, Georgia 30322}
\email{adele.lopez@emory.edu}
\subjclass[2010]{Primary: 11F30, 11F33; Secondary: 14J33}
\thanks{The author thanks the NSF for their generous support.}
\begin{document}

\begin{abstract}In the genus 1 case, mirror symmetry reduces to the statement that a certain family of generating functions, relating to an elliptic curve, are quasimodular. In their proof of this fact, Kaneko and Zagier used a related family of generating functions $A_n(\tau)$, which they show to be quasimodular. We show that these $A_n$'s also satisfy Kummer-type congruences. Additionally, we show that for a prime $p$, the $p$th power coefficients of $A_n$ $p$-adically converge to zero, for specific values of $n$. 
\end{abstract}

\maketitle
 
\section{Introduction}

	Mirror symmetry is a special relation between Calabi-Yau manifolds. It concerns the generating function \begin{equation}F_g(t)=\sum_d N_{g,d}q^d,\end{equation} which counts the (suitably adjusted) numbers $N_{g,d}$ of holomorphic curves of genus $g$ and degree $d$ on a Calabi-Yau manifold $X$. These $F_g(t)$ are symplectic manifold invariants of $X$. Mirror symmetry conjectures that these $F_g(t)$ have an alternative interpretation as complex manifold invariants of a family of `mirror' Calabi-Yau manifolds $Y_t$. This has been proven in the case where the Calabi-Yau manifolds have genus 1, where both $X$ and the $Y_t$'s are elliptic curves. In this case, it reduces to the statement that these $F_g(t)$ have a modularity property, specifically, that they are quasimodular~\cite{Dijkgraaf}. 

	The space of quasimodular forms $\widetilde{M}_*(\Gamma_1)$ of level 1 is the graded ring generated by the Eisenstein series $E_2$, $E_4$ and $E_6$, and graded by assigning weight $k$ to $E_k$. If we replace $E_2(\tau)$ (which is not modular), with $E_2^*(\tau)=E_2(\tau) -\frac{3}{\pi\text{Im}(\tau)}$ (which is modular, but not holomorphic), we obtain a weak harmonic Maass form, of which the quasimodular form is the holomorphic part. 
	Kaneko and Zagier proved~\cite{KZ} that the generating functions arising from the genus one case of mirror symmetry are quasimodular, along with a related set of generating functions, by considering the following generalization of the Jacobi theta function:
\begin{equation}
\label{Theta}
	\Theta(X,q,\zeta)=\prod_{m>0}(1-q^m)\prod_{\substack{n>0\\ n \text{ odd}}}(1-e^{n^2X/8}q^{n/2}\zeta)(1-e^{-n^2X/8}q^{n/2}\zeta^{-1})
\end{equation}
as a formal power series in $X$ and $q^{1/2}$ with coefficients in $\mathbb Q[\zeta,\zeta^{-1}]$. A family of quasimodular forms, $A_n(q)$, may now be derived from this. Let $\Theta_0(X,q)$ be the coefficient of $\zeta^0$ in $\Theta(X,q,\zeta)$ considered as a Laurent series in $\zeta$. Now expand $\Theta_0(X,q)$ as a Taylor series in $X$: 
\begin{equation}\label{Theta0}\Theta_0(X,q)=\sum_{n=0}^{\infty}A_n(q)X^{2n}.\end{equation} These $A_n(q)$'s are our objects of interest. They are quasimodular forms of weight $6n$ and level 1, as proved by Kaneko and	Zagier in their paper. This fact can then be used to prove that the $F_g(\tau)$'s, which are the coefficients of $X^{2g-2}$ in $\log\Theta_0$, are quasimodular~\cite{KZ}. (We can think of these $A_n$'s as `counting' the number of not necessarily connected covers of an elliptic curve $E$, with Euler number $2-2g$ and degree $d$.)
Here we prove that the ${A}_n$'s (once normalized to have coprime integer coefficients) also satisfy Kummer-type congruences. \begin{example}For instance, letting $\overline{A}_n$ denote the normalized $A_n$, we have that	

% Next term for A_4:  36642414144q^7   
% Next term for A_6: 129026591699460q^6
 \begin{align*}
  \overline{A}_2(q)&=q^2 + 80q^3 + 1230q^4 + 9248q^5 + 46020q^6 + 174624q^7  + 549704q^8 + \cdots \\
&\equiv q^2 +  3q^5 + 4q^7  + 3q^{10} + 3q^{12} + 2q^{13} + q^{15} + q^{20} + 4q^{21} + 2q^{23} + \cdots  &\pmod 5\\
 \intertext{ and }
  \overline{A}_4(q)&=q^2 + 6560q^3 + 1673310q^4 + 98704448q^5 + 2504270340q^6 + \cdots\\
&\equiv  q^2 +  3q^5 + 4q^7  + 3q^{10} + 3q^{12} + 2q^{13} + q^{15} + q^{20} + 4q^{21} + 2q^{23} +\cdots &\pmod 5 \\
\intertext{ are congruent modulo 5, and }
\overline{A}_3(q)&=q^2 + 728q^3 + 45990q^4 + 968240q^5 + 10876740q^6 + 81037296q^7  +\cdots \\
&\equiv q^2 + q^8 + 2q^5 + 6q^6 +5q^8 +3q^9 +q^{11} + 5q^{12}+ 2q^{14} + 3q^{15} + \cdots  &\pmod 9 \\
\intertext{and} 
\overline{A}_6(q)&=q^2 + 531440q^3 + 2176254990q^4 + 998066826848q^5 +   \cdots\\
&\equiv q^2 + q^8 + 2q^5 + 6q^6 +5q^8 +3q^9 +q^{11} + 5q^{12}+ 2q^{14} + 3q^{15} + \cdots & \pmod 9 
\end{align*} are congruent modulo 9.
\end{example} In general, we have the following theorem.
\begin{theorem}
\label{kummer}
For a prime $p$ and a positive integer $s$, if $2i\equiv 2j \pmod{ \phi(p^s)}$, then for $n\ge 1$, we have
$$\overline{A}_i(q)\equiv \overline{A}_j(q) \pmod {p^s},$$ where $\phi$ is Euler's totient function. 
\end{theorem}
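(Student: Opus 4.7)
The plan is to derive an explicit closed form for $\overline{A}_n(q)$ by directly expanding the product defining $\Theta(X,q,\zeta)$, exhibiting the $n$-dependence as $z^{2n}$ for integers $z$, and then applying Euler's theorem termwise to deduce the Kummer-type congruence.

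First, I would expand the product in \eqref{Theta} by choosing, for each odd positive $n$, one of the four monomials arising from the pair $(1-e^{n^2X/8}q^{n/2}\zeta)(1-e^{-n^2X/8}q^{n/2}\zeta^{-1})$, indexing the configurations by triples $(S,T,U)$ of pairwise disjoint subsets of the positive odd integers (according to which term is chosen). Extracting $[\zeta^0]$ forces $|S|=|T|$, yielding
\begin{equation*}
\Theta_0(X,q)=\prod_{m>0}(1-q^m)\sum_{\substack{S,T,U\\|S|=|T|}}e^{zX}\,q^k,
\end{equation*}
where $z=\tfrac18\bigl(\sum_{s\in S}s^2-\sum_{t\in T}t^2\bigr)$ and $k=\tfrac12(\sum S+\sum T)+\sum U$; since odd squares are $\equiv 1\pmod 8$ and $|S|=|T|$, this $z$ is always an integer. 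Taking the coefficient of $X^{2n}$ and grouping $(S,T,U)\leftrightarrow(T,S,U)$ (which contribute equal $z^{2n}$) produces, after normalization,
\begin{equation*}
\overline{A}_n(q)=\prod_{m>0}(1-q^m)\sum_{\{S,T\},U}z^{2n}\,q^k,
\end{equation*}
displaying $\overline{A}_n$ as a sum of terms geometric in $n$. A direct check matches the given examples: $[q^3]\overline{A}_n=3^{2n}-1$ and $[q^4]\overline{A}_n=(3^{2n}+1)(2^{2n}-1)$.

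With this formula the congruence reduces to showing $z^{2i}\equiv z^{2j}\pmod{p^s}$ for each contributing configuration. When $\gcd(z,p)=1$, Euler's theorem gives $z^{\phi(p^s)}\equiv 1\pmod{p^s}$, so the hypothesis $2i\equiv 2j\pmod{\phi(p^s)}$ delivers this at once.

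The principal obstacle is the configurations where $p\mid z$: here Euler's theorem does not apply, and one instead has $v_p(z^{2i}-z^{2j})\ge 2\min(i,j)\,v_p(z)$, which ensures cancellation modulo $p^s$ only when this quantity is at least $s$. Handling these \emph{bad} terms---by showing that across the finitely many configurations contributing to each $[q^k]$ the hypothesis forces the required cancellations, perhaps by combining the expansion above with the quasimodular structure of $\overline{A}_n$ and Kummer congruences for Eisenstein series---will be the crux of completing the argument.
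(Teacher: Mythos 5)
Your expansion is, in substance, the paper's own: your triples $(S,T,U)$ with $|S|=|T|$ are the paper's pairs of equal-size sets of half-integers (equivalently, Frobenius coordinates of a partition), your $z$ is the paper's $\lambda(\ell)$, and your grouping over unordered $\{S,T\}$ is the paper's factor $\tfrac12$ coming from conjugate partitions; your sample checks $[q^3]\overline{A}_n=3^{2n}-1$ and $[q^4]\overline{A}_n=(3^{2n}+1)(2^{2n}-1)$ are correct. The only divergence is at the final step: the paper simply concludes that the theorem ``follows from Euler's generalization of Fermat's little theorem,'' i.e.\ it applies the termwise congruence $z^{2i}\equiv z^{2j}\pmod{p^s}$ and never discusses the configurations with $p\mid z$ --- precisely the point at which you stop.

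Concerning that remaining case: the way to close it is not to look for cancellations across configurations. If $p\mid z$ and $i\neq j$, then $v_p\bigl(z^{2i}-z^{2j}\bigr)=2\min(i,j)\,v_p(z)$, so every bad term is individually $\equiv 0\pmod{p^s}$ as soon as $2\min(i,j)\ge s$, and the termwise argument finishes with no interaction between terms. When $2\min(i,j)<s$ no rescue is possible: your formula gives $[q^3]\overline{A}_n=9^n-1$, the only bad class there is $z=\pm3$, and for $p=3$, $s=3$, $(i,j)=(1,10)$ one has $2\equiv 20\pmod{18}$ while $9-1=8\not\equiv 9^{10}-1\equiv-1\pmod{27}$; so the statement has to be read with the implicit restriction $2\min(i,j)\ge s$ (satisfied in all of the paper's examples), and the hoped-for combination with Eisenstein Kummer congruences cannot supply the missing cancellation. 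In short: your derivation coincides with the paper's, the obstacle you flag is genuine but is silently present in the paper's one-line conclusion as well, and it is closed by the elementary valuation remark above under the necessary extra hypothesis relating $\min(i,j)$ to $s$, rather than by any further structural input.
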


\begin{remark}
These congruences are not a trivial consequence of the Kummer-type congruences that are well known to hold for the Eisenstein series. We will demonstrate this in the last section.
\end{remark}

\begin{remark}
The author was unable to find a conceptual proof of this theorem from the theory of elliptic curves and mirror symmetry.
\end{remark}

In addition to the relative congruences relating these quasimodular forms, the $p$th power coefficients tend $p$-adically to zero. This follows from Serre's theory of $p$-adic modular forms~\cite{Serre}. More precisely, we have the following theorem.

\begin{theorem}
\label{padic}
If $k\ge 1$ and $p$ is a prime less than $7$ or $6k\equiv 4, 6, 8, 10, 14 \pmod{p-1}$, then $p$-adically, we have 
\[\lim_{n\to\infty} a_k(p^n)=0,\] 
where $a_k(i)$ is the $i$th coefficient of $\overline{A}_k$. 
\end{theorem}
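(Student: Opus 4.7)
The plan is to apply Serre's theory of $p$-adic modular forms together with the decomposition of the $U_p$ operator on the space of $p$-adic cusp forms. By Kaneko and Zagier, $A_k$ is a quasimodular form of weight $6k$ on $\mathrm{SL}_2(\mathbb{Z})$, so $A_k \in \mathbb{Q}[E_2, E_4, E_6]$. Serre's observation that $E_2$ is itself a $p$-adic modular form (realized as the $p$-adic limit of the classical Eisenstein series $E_{2+(p-1)p^n}$ as $n\to\infty$) then implies that $A_k$ is a $p$-adic modular form of weight $6k$ in the $p$-adic weight space $X = \mathbb{Z}_p \times \mathbb{Z}/(p-1)\mathbb{Z}$. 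The examples show $\overline{A}_k(q) = q^2 + O(q^3)$ for $k \geq 1$, so $A_k$ is in fact a $p$-adic cusp form.

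Next, observe that $a_k(p^n)$ equals the coefficient of $q$ in $U_p^n A_k$, where $U_p$ is the Atkin operator $\sum b(m)\, q^m \mapsto \sum b(pm)\, q^m$. Thus it suffices to show that $U_p^n A_k \to 0$ $p$-adically. By the theory of Hida (anticipated by Serre in the level-one setting), any $p$-adic cusp form admits a decomposition
\[A_k = A_k^{\text{ord}} + A_k^{\text{non-ord}}\]
in which $U_p^n$ applied to the non-ordinary part tends $p$-adically to zero by complete continuity of $U_p$. Therefore it remains to show that the ordinary projection $A_k^{\text{ord}}$ vanishes.

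This is where the hypothesis on $6k \pmod{p-1}$ enters. Serre's control on $p$-adic modular forms relates the ordinary cuspidal part at weight $6k$ to classical cusp forms of a base weight $w$ with $w \equiv 6k \pmod{p-1}$. Under the hypothesis $6k \equiv w \pmod{p-1}$ for some $w \in \{4, 6, 8, 10, 14\}$, we have $\dim S_w(\mathrm{SL}_2(\mathbb{Z})) = 0$, which should force the ordinary cuspidal part at weight $6k$ to vanish, and hence $A_k^{\text{ord}} = 0$. For $p < 7$, a direct check verifies that $6k \pmod{p-1}$ always lies in $\{4, 6, 8, 10, 14\} \pmod{p-1}$ for every $k \geq 1$, so the hypothesis is automatic in those cases.

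The main obstacle is the last step: rigorously deriving the vanishing of the ordinary cuspidal part at weight $6k$ from the classical vanishing $\dim S_w = 0$ for $w \in \{4, 6, 8, 10, 14\}$. When $6k$ itself is a weight with many classical cusp forms, the real content is that every one of those forms must be non-ordinary at $p$; obtaining this uniformly from the theory of $p$-adic modular forms, rather than case by case, is the essential technical step that requires care.
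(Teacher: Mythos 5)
Your setup coincides with the paper's: $A_k$ is a $p$-adic modular form of weight $6k$ because it is a weight-$6k$ polynomial in $E_2,E_4,E_6$ and $E_2$ is a $p$-adic modular form of weight $2$. But where you go on to build an ordinary/non-ordinary argument, the paper simply cites Serre's Theorem 7, and the step you explicitly leave open --- the vanishing of the ordinary cuspidal part in weight $6k$ --- is exactly the content of that citation. So the gap you flag is a genuine gap, and moreover your proposed route to closing it does not work as stated. The vanishing $\dim S_w(\mathrm{SL}_2(\mathbb{Z}))=0$ for $w\in\{4,6,8,10,14\}$ does not by itself force the weight-$6k$ ordinary cuspidal part to vanish: the control theorem you would need (Hida's) identifies the ordinary part of the $p$-adic cusp forms of tame level one in the congruence class of $w$ with the ordinary part of the classical space $S_w(\Gamma_0(p))$, not of $S_w(\mathrm{SL}_2(\mathbb{Z}))$, and $S_w(\Gamma_0(p))$ is nonzero for these $w$ once $p$ is at all large. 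The missing input is that the $p$-new forms in $S_w(\Gamma_0(p))$ have $U_p$-eigenvalue $\pm p^{w/2-1}$, hence slope $w/2-1>0$ (note this fails precisely for $w=2$, which is why $2$ is absent from the list), so they contribute nothing to the ordinary part; combined with the absence of oldforms from level one this gives ordinary cuspidal rank $0$ in the class of $6k$, and constancy of that rank in the class then transfers the vanishing to weight $6k$. Without this extra ingredient (or Serre's theorem, which packages the level-one statement directly), ``$\dim S_w=0$ should force the ordinary part to vanish'' is an assertion, not a proof.

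Two smaller points. Calling $\overline{A}_k$ a ``$p$-adic cusp form'' because its expansion begins at $q^2$ is not a justification: vanishing of $a_0$ does not make a $p$-adic form cuspidal, and in your decomposition you must also rule out an ordinary \emph{Eisenstein} component. That is in fact where $a_0=0$ earns its keep: $a_0$ is unchanged by $U_p$, and the $U_p$-eigenvalue-$1$ Eisenstein series in weight $6k$ has nonzero constant term under the stated weight hypotheses, while the complementary Eisenstein vector has eigenvalue $p^{6k-1}$ and dies under iteration; this forces the Eisenstein coefficient of the ordinary projection to be zero. Also, $U_p$ is not completely continuous on the full space of Serre $p$-adic modular forms (only on overconvergent ones); the correct justification that $U_p^n$ annihilates the non-ordinary part in the limit is Hida's construction $e=\lim U_p^{n!}$ together with integrality of coefficients. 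The paper's own proof sidesteps all of this by deriving, as you do, that $A_k$ is a $p$-adic modular form of weight $6k$, and then quoting Serre's Theorem 7, whose hypotheses are precisely the weight conditions appearing in the statement.
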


\begin{example}
The $3$rd power coefficients for $\overline{A}_3$ begin with:
\begin{align*}
   a_3(3^1) &= 728  & =&\;\; 2^3\cdot 7 \cdot 13,\\ 
   a_3(3^2) &= 2028730080 & =&\;\; 2^5\cdot 3 \cdot 5 \cdot 13^2 \cdot 89 \cdot 281,\\  
   a_3(3^3) &= 1747100845087920 & =& \;\;2^4\cdot 3^3 \cdot 5 \cdot 7 \cdot 617 \cdot 187275523,\\  
   a_3(3^4) &= 1249380857829754167840 & =&\;\; 2^5\cdot 3^5 \cdot 5 \cdot 7^2 \cdot 794953 \cdot 824956519.\\  
\end{align*}

The increasing powers of 3 illustrate the fact that  $\lim_{n\to\infty} a_3(3^n)=0.$
\end{example}

\begin{acknowledgments} The author thanks Ken Ono for suggesting this problem, and for his helpful advice. % She also thanks the NSF for their generous support.
\end{acknowledgments} 
	
\section{Proofs of Theorems~\ref{kummer} and~\ref{padic}}

Here we prove Theorems~\ref{kummer} and~\ref{padic}.

\begin{proof}[Proof of Theorem~\ref{kummer}]

This result is a consequence of equation (5.9) in Dijkgraaf~\cite{Dijkgraaf}.  For completeness, we will provide the full argument here.

We'll begin by considering the second product from our expression of $\Theta(X,q,\zeta)$ in~(\ref{Theta}). First, replace $n$ with $r=n/2$, and factor it into two infinite products. This product becomes \begin{equation}\prod_{r\in I}(1-e^{r^2X/2}q^{r}\zeta)\prod_{s\in I}(1-e^{-s^2X/2}q^{s}\zeta^{-1}),\end{equation} where $I$ is the set of positive half-integers.

This product has a combinatorial interpretation. A term of this double product can be computed by taking two finite subsets $A, B$ of $I$, and restricting the product to $r\in A$ and $s\in B$. For our purposes, we only need to consider the terms which are constant with respect to $\zeta$, as these are the only terms used in the definition of the $A_n$'s. These terms are precisely the ones for which $|A|=|B|$. Also, note that the exponent of $q$ in the term is counting the combined sum of all the elements of $A$ and $B$. 

Consider pairs of sets $A,B\subset I$ with the same cardinality and the same combined sum $d$. We can construct a unique partition of $d$ from each distinct pair of sets, and vice-versa. Thus, we can arrange the terms of interest into a sum over partitions as follows: 
\begin{equation}\sum_{d=0}^\infty \sum_{\ell\in \text{Partitions}(d)} e^{\lambda(\ell)X}q^d,\end{equation} where 
\[ \lambda(\ell)=\frac{1}{2}\left(\sum_{r\in A}r^2-\sum_{s\in B}s^2\right),\] with $A$ and $B$ being the sets of half-integers corresponding to the partition $\ell$. 

Now, let's relate all this back to the $A_n$'s. From the above reasoning, we can see that $\Theta_0(X,q)$ from (\ref{Theta0}) becomes 
\begin{equation}\Theta_0(X,q)=\prod_{m>0}(1-q^m)\sum_{d=0}^\infty \sum_{\ell\in \text{Partitions}(d)} e^{\lambda(\ell)X}q^d.\end{equation}
Expand the exponential as a Taylor series to get \begin{equation}\Theta_0(X,q)=\prod_{m>0}(1-q^m)\sum_{d=0}^{\infty}\sum_{\ell\in \text{Partitions}(d)} \sum_{k=0}^{\infty}\frac{1}{k!}\lambda(\ell)^k X^k q^d.\end{equation} Hence, we have that 
\begin{equation}A_n(\tau)=\frac{1}{(2n)!}\prod_{m>0}(1-q^m)\sum_{d=0}^{\infty}\sum_{\ell\in \text{Partitions}(d)} \lambda(\ell)^{2n}q^d.\end{equation}

Since for any odd integer $a$, $a^2\equiv 1 \pmod 8$, $\lambda(\ell)$ is always an integer. Considering pairs of a partition and its conjugate, we see that the sum of the $\lambda(\ell)^k$'s for the partitions of a given number will either be zero for odd $k$, and even for even $k$.  Thus, the normalized $A_n$'s are described by  
\begin{equation}\overline{A}_n(\tau)=\frac{1}{2}\prod_{m>0}(1-q^m)\sum_{d=0}^{\infty}\sum_{\ell\in \text{Partitions}(d)}\lambda(\ell)^{2n}q^d.\end{equation} It is simple to check that the coefficient of $q^2$ in $\overline{A}_n$ is one for every $n>0$. 

The theorem then follows from Euler's generalization of Fermat's little theorem. 
 
\end{proof}

The second theorem is a simple consequence of Serre's work on $p$-adic modular forms~\cite{Serre}. 
\begin{proof}[Proof of Theorem~\ref{padic}]

Since quasimodular forms are generated by $E_2$, $E_4$, and $E_6$, and since $E_2$ is a $p$-adic modular form of weight 2~\cite{Serre}, it follows from Kaneko and Zagier~\cite{KZ} that $A_k$ is a $p$-adic modular form of weight $6k$. The theorem then follows from Theorem 7 in Serre~\cite{Serre}. 

\end{proof}

We now will demonstrate the fact that Theorem~\ref{kummer} is not a trivial consequence of the Kummer congruences for the Eisenstein series.

\begin{example}
 Recall that $\overline{A}_2\equiv\overline{A}_4\pmod 5.$ We will convert this into a relation of Eisenstein series. Since the quasimodular forms of level 1 are generated by the Eisenstein series $E_2$, $E_4$, and $E_6$, we can compute 
\begin{align*}
\overline{A}_2(q)&=\frac{-875 E_2^6 + 2220 E_2^4 E_4 - 1791 E_2^2 E_4^2 + 1050 E_4^3 +
 580 E_2^3 E_6 - 1788 E_2 E_4 E_6 + 604 E_6^2}{447897600,}
\intertext{and}
\overline{A}_4(q)&= (-7072690625 E_2^{12} + 29791020000 E_2^{10} E_4 - 17984909250 E_2^8 E_4^2 
  - 41175027180 E_2^6 E_4^3\\ & - 73855453833 E_2^4 E_4^4 +  692323272900 E_2^2 E_4^5 
  + 41478466500 E_4^6 + 33993155000 E_2^9 E_6\\& - 298920573000 E_2^7 E_4 E_6
  + 920662991640 E_2^5 E_4^2 E_6 - 1574832872088 E_2^3 E_4^3 E_6\\& - 887970913200 E_2 E_4^4 E_6
   + 70320075000 E_2^6 E_6^2 -  283741244640 E_2^4 E_4 E_6^2\\& + 1785182642712 E_2^2 E_4^2 E_6^2 
   + 189036658800 E_4^3 E_6^2 - 189291716320 E_2^3 E_6^3\\& - 405118626528 E_2 E_4 E_6^3 
   +17175744112 E_6^4)/60183678025728000.
   \end{align*} If we subtract these, renormalize, and reduce the coefficients of the polynomial in $E_2, E_4$, and $E_6$ modulo 5, we get the following:
 
\[2 (E_2^4 E_4^4 + E_2^3 E_4^3 E_6 + E_2^2 E_4^2 E_6^2 + E_2 E_4 E_6^3 + E_6^4)\equiv 0 \pmod 5.\] This congruence follows from the fact that $E_4\equiv 1 \pmod 5$ and $E_6\equiv E_2 \pmod 5$. However, it is not obvious that $\overline{A}_2 \equiv  \overline{A}_4 \pmod 5$ \it{a priori}.
\end{example}

\bibliography{Bibliography}{}
\bibliographystyle{amsalpha}

\end{document}